\newtheorem{theorem}{Theorem}
\newtheorem{cor}[theorem]{Corollary}
\newtheorem{thm}{Theorem}
\newtheorem{lem}[thm]{Lemma}
\def\cC{{\mathcal C}}
\def\cH{{\mathcal H}}
\def\cP{{\mathcal P}}
\def\cQ{{\mathcal Q}}
\def\fA{{\mathfrak A}}
\def\fU{{\mathfrak U}}
\def\fV{{\mathfrak V}}
\def\F{{\mathbb F}}
\def\R{{\mathbb R}}
\def\Z{{\mathbb Z}}
\def\\{\cr}
\def\({\left(}
\def\){\right)}
\def\[{\left[}
\def\]{\right]}
\def\<{\langle}
\def\>{\rangle}
\def\fl#1{\left\lfloor#1\right\rfloor}
\def\mand{\qquad\mbox{and}\qquad}
\def\vec#1{\mathbf{#1}}
\def\Hm{{\cH}_{a}(m)}
\def\GL{\mathrm{GL}}
\def\NU{N(a,m;U,V)}
\def\Cm{{\cC}_{s}(a,m)}
\def\vol{\mathrm{vol}\,}
\begin{document}

\title{On the Convex Hull of the Points on Modular Hyperbolas}

\author{\sc{Sergei V.~Konyagin}\\
Steklov Mathematical Institute\\
8, Gubkin Street, Moscow, 119991, Russia\\
{\tt konyagin@mi.ras.ru}\\
\and
{\sc Igor E.~Shparlinski} \\
{Department of Computing, Macquarie University} \\
{Sydney, NSW 2109, Australia} \\
{\tt igor.shparlinski@mq.edu.au}}

\date{\today}
\maketitle

\begin{abstract} Given  integers $a$ and $m\ge 2$, let $\Hm$ be the following set
of integral points
$$
\Hm= \{(x,y) \ : \  xy \equiv a \pmod m,\ 1\le x,y \le m-1\}
$$

We improve several previously known  upper bounds on $v_a(m)$, the
number of vertices of the convex closure of $\Hm$, and show that
uniformly over all $a$ with $\gcd(a,m)=1$ we have $v_a(m) \le
m^{1/2 + o(1)}$ and furthermore, we have $v_a(m) \le
m^{5/12 + o(1)}$ for $m$ which are almost squarefree.
\end{abstract}

\paragraph*{2010 Mathematics Subject Classification:} {11A07, 11C08, 11D79}

\paragraph*{Keywords:}
{congruence, modular hyperbola, integral polygon, convex hull}

\section{Introduction}

For integers $a$ and $m\ge 2$, we define the modular hyperbola, $\Hm$, to be the set
of integral points
$$
\Hm= \{(x,y) \ : \  xy \equiv a \pmod m,\ 1\le x,y \le m-1\}.
$$

A systematic study of geometric properties of the set $\Hm$ has
been initiated in~\cite{FKSY} and continued in a number of works,
see~\cite{ChanShp,CillGar,FKS,KSY,Shp1,Shp3,ShpWin1} and
references therein, where also several surprising links to various
number theoretic questions have been discovered.

In particular, following~\cite{FKS,KSY}, we consider the convex closure $\Cm$
of the set $\Hm$ and let $v_a(n)$ denote the number of vertices of $\Cm$.

For $a=1$, it is shown in~\cite{KSY} that
\begin{equation}
\label{eq:v 3/4}
v_1(m) \le m^{3/4 + o(1)},
\end{equation}
which has been improved in~\cite{FKS} as
\begin{equation}
\label{eq:v 7/12}
v_1(m) \le m^{7/12 + o(1)},
\end{equation}
by using the bound $O(S^{1/3})$ of G.~Andrews~\cite{Andrews} on the number of  vertices
of a convex  polygon of area $S$ vertices on the integral lattice $\Z^2$.
In~\cite{KSY} a number of
other lower and upper bounds on  $v_1(m)$ have been established, which
however apply only to special classes of integers $m$. For example,
it shown in~\cite[Theorem~3.2]{KSY}  that for all $m > 1$,
$$
v_1(m) \ge  2 (\tau(m-1)-1)
$$
where $\tau(k)$ is the number of positive integer divisors of $k$, and
this estimate is tight as  $$
\#\{m \le x \ : \ v_1(m) = 2 (\tau(m-1)-1)  \} \gg \frac{x}{\log x},
$$
where, as usual, the notations  $U \ll V$ and  $V \gg U$ are
equivalent to $U = O(V)$  (throughout the paper, except
Lemma~\ref{lem:Curve}, the implied constants are absolute).
  Besides, one
can find in~\cite{KSY} an extensive numerical study of  $v_1(m)$  which
shows a somewhat mysterious behaviour  which exhibits both some chaotic and
regular aspects.

It has also been noticed in~\cite{FKS} that~\cite[Theorem~1]{Shp3}
implies that
\begin{equation}
\label{eq:v 1/2 a.a.}
v_a(m) \le m^{1/2 + o(1)},
\end{equation}
for all but $o(\varphi(m))$ integers $a$ with $1\le a  \le m-1$ and
$\gcd(a,m)=1$, where, as usual, $\varphi(m)$ denotes the Euler function.

Here we use rather elementary arguments to improve and generalise
the bounds~\eqref{eq:v 3/4}, \eqref{eq:v 7/12}
and~\eqref{eq:v 1/2 a.a.} and show that
in fact~\eqref{eq:v 1/2 a.a.}
holds for all $a$ with  $\gcd(a,m)=1$ and
also prove a stronger bound for
integers $m$ which are almost squarefree.
More precisely, we  obtain the following results.

\begin{thm}
\label{thm:Gen m}
For an arbitrary integer $m\ge 2$, uniformly over integers $a$ with $\gcd(a,m)=1$, we have
$$
v_a(m) \le m^{1/2+o(1)},
$$
as $m \to \infty$.
\end{thm}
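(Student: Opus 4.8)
The plan is to bound the number of vertices of $\Cm$ by splitting the hyperbola $\Hm$ into arcs on which the points are ``spread out'' and controlling each arc separately. The starting observation is that a convex polygon $P$ inscribed in a region of ``width'' roughly $W$ in one coordinate direction and of ``height'' roughly $H$ in the other can have at most $O((WH)^{1/3})$ vertices by the Andrews-type bound already invoked in the excerpt (or, more crudely, $O(W^{2/3})$ vertices if one only uses that consecutive edge vectors are distinct primitive lattice vectors and the total horizontal displacement is $W$). So I would like to cover $\Hm$ by $O(m^{o(1)})$ pieces, each of which lies in a box of area $m^{3/2+o(1)}$, and on each of which the vertices of $\Cm$ form a convex chain; summing the per-box bounds then gives $m^{1/2+o(1)}$.

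First I would recall the hyperbola's basic shape: for each $x$ with $\gcd(x,m)=1$ there is a unique $y=y(x)\in\{1,\dots,m-1\}$ with $xy\equiv a\pmod m$, and the map $x\mapsto y(x)$ is a decreasing ``staircase'' when $x$ ranges over a dyadic block $[X,2X]$, because $y(x)\approx am/(x\cdot(\text{something bounded}))$ — more precisely, $xy(x)=a+km$ for an integer $k$ with $0\le k< x$, so on a dyadic block the values $xy(x)$ take at most $O(X)$ values and $y(x)\in[a/(2X)\cdot\text{stuff}, \dots]$. The key quantitative input I would use is a bound on the number of lattice points of $\Hm$ in a box $[X,2X]\times[Y,2Y]$: by a standard divisor-type argument this count is $(XY/m+1)m^{o(1)}$. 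Then, on the dyadic-by-dyadic piece $[X,2X]\times[Y,2Y]$, the vertices of the global convex hull $\Cm$ lying in that piece form a convex chain among these points; since the chain is monotone and lives in a box with sides $X$ and $Y$, it has at most $O((XY)^{1/3})$ vertices, but when $XY\ge m$ there can be more points than that so I instead use that it has at most $\min\{(XY/m+1)m^{o(1)},\, O((XY)^{1/3})\}$ vertices. Summing $(XY)^{1/3}$ over the $O(\log^2 m)$ relevant dyadic pairs with $XY\le m^{3/2}$ and $(XY/m)m^{o(1)}$ over the pairs with $XY> m^{3/2}$, and noting that along the hyperbola only $O(\log m)$ dyadic pairs $(X,Y)$ actually occur (since $XY$ is essentially pinned down to lie in $[a,a+m)$ scaled appropriately, i.e.\ $Y\asymp m/X$ up to the factor $k\le x$), yields the claimed bound $m^{1/2+o(1)}$.

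The main obstacle I anticipate is the interplay between the two regimes and, more subtly, the fact that $y(x)$ is \emph{not} simply $\asymp m/x$: writing $xy(x)=a+km$, the ``level'' $k$ can be as large as $x$, so a single dyadic block in $x$ can spread over many dyadic blocks in $y$. The fix is to partition instead according to the value of $k=\lfloor xy(x)/m\rfloor$ (or according to dyadic blocks of $k$): for fixed $k$, the points $(x,y)$ with $xy=a+km$ lie exactly on a genuine (non-modular) hyperbola $xy=N$ with $N=a+km<m^2$, whose integral points number $N^{o(1)}=m^{o(1)}$, and the convex hull of integral points on $xy=N$ with $1\le x,y\le m-1$ has $N^{o(1)}$ vertices trivially. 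Thus $\Hm$ is a union over $0\le k<m$ of such hyperbola-slices, which looks too lossy; the real argument must instead group the $x$'s so that within a group $k$ varies only by a bounded factor, use convexity of $y\mapsto$ the arc on each slice, and bound the number of ``breakpoints'' where the active slice changes. Controlling that the global convex chain crosses between slices only $m^{1/2+o(1)}$ times — equivalently, that a convex polygon meeting $m^{o(1)}$-point sets on each of these nested hyperbola arcs cannot have too many vertices — is exactly where the $m^{1/2}$ barrier comes from, and making that rigorous (presumably via an area or edge-length count over the whole region $[1,m-1]^2$ combined with the divisor bound) is the crux of the proof.
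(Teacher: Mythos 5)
There is a genuine gap, and you have in fact located it yourself: the ``crux'' you defer at the end is precisely the step the paper's proof supplies, and without it your dyadic scheme cannot close. In the regime $XY>m^{3/2}$ neither of your two bounds is adequate: a dyadic box with $XY\asymp m^{2}$ (say $[m/2,m]\times[m/2,m]$) contains $\asymp m^{1+o(1)}$ points of $\Hm$ by equidistribution, while the Andrews bound only caps the convex chain there at $O((XY)^{1/3})=O(m^{2/3})$; for every $XY>m^{3/2}$ both quantities exceed $m^{1/2}$, so the minimum does too. Your parenthetical claim that only $O(\log m)$ dyadic pairs occur because $Y\asymp m/X$ is also false (as you note, the level $k$ in $xy=a+km$ ranges widely, and indeed $\Hm$ equidistributes over all of $[1,m-1]^2$). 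As written, the proposal yields at best $v_a(m)\le m^{2/3+o(1)}$.

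The missing idea is an equidistribution statement about where vertices can live, not a finer convexity bound. The paper uses the Weil bound for Kloosterman sums (Lemma~\ref{lem:Asymp}): $N(a,m;U,V)=UV\varphi(m)/m^{2}+O(m^{1/2+o(1)})$, so every box of area $UV\ge m^{3/2+\varepsilon/2}$ translated to any corner of $[0,m]^{2}$ contains a point of $\Hm$. Hence if a vertex $(x,y)$ of $\Cm$ lying in $[0,m/2]^{2}$ had $xy>m^{3/2+\varepsilon}$, one could find points of $\Hm$ in each of the four translates of $[0,xm^{-\varepsilon/4}]\times[0,ym^{-\varepsilon/4}]$ to the corners of the square, and $(x,y)$ would lie strictly inside their convex hull --- a contradiction. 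This confines all vertices (after the symmetric argument near the other three corners) to the region $xy\le m^{3/2+\varepsilon}$, where your own divisor count --- $xy=A+\ell m$ with $0\le \ell\le m^{1/2+\varepsilon}$ and $\tau(A+\ell m)=m^{o(1)}$ --- bounds the total number of \emph{points}, not merely vertices, by $m^{1/2+\varepsilon+o(1)}$. Once the confinement is established, no Andrews bound, dyadic decomposition, or analysis of how the hull meets the slices $xy=a+km$ is needed at all.
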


For an integer $m$ we denote by  $m^*$ its kernel, that is,
the product of all prime divisors of $m$.

\begin{thm}
\label{thm:Typical m}
For an arbitrary integer $m\ge 2$, uniformly over integers $a$ with $\gcd(a,m)=1$, we have
$$
v_a(m) \le t m^{5/12 + o(1)} ,
$$
where $t = m/m^*$.
\end{thm}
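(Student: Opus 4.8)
I would prove Theorem~\ref{thm:Typical m} by sharpening the argument behind Theorem~\ref{thm:Gen m}, making it sensitive to the multiplicative structure of $m$.

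The starting point is the same reduction. Since $\Hm$, hence $\Cm$, is invariant under $(x,y)\mapsto(y,x)$ and $(x,y)\mapsto(m-x,m-y)$, the boundary $\partial\Cm$ breaks into a bounded number of arcs on each of which $x$, $y$ and $xy$ are monotone, and it suffices to bound the vertices on one such arc. I would dissect that arc into the $O(\log^2 m)$ dyadic boxes $\cB=\{U\le x<2U,\ V\le y<2V\}$ meeting it, losing only $m^{o(1)}$. On our arc $xy$ is monotone, so every vertex in $\cB$ lies on one of the hyperbolas $xy=c$ with $c\equiv a\pmod m$ and $c$ in an interval of length $\asymp UV$; because $\gcd(a,m)=1$ there are only $O(UV/m+1)$ such ``branches'', and on each of them the points of $\Hm$ are precisely the divisor pairs of $c$.

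The task is to bound the number of vertices of $\Cm$ inside $\cB$. Two crude bounds give Theorem~\ref{thm:Gen m}: a divisor bound $\ll(UV/m+1)\,m^{o(1)}$ (sum $\tau(c)$ over the admissible $c$), and the bound of Andrews $\ll(UV)^{1/3}$ for any convex lattice polygon in a box of area $UV$; balancing these and summing over the boxes yields $m^{1/2+o(1)}$. To do better I would analyse the branches one at a time using Lemma~\ref{lem:Curve}: the vertices on a single branch form a convex lattice polygon inscribed in an arc of $xy=c$, so there are few of them, and — the real content — only a controlled number of branches can actually contribute, because as one moves along the arc a vertex falls on a new branch $xy=c$ only when that hyperbola is extremal among its neighbours along the current edge direction. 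Counting these branch changes, and the vertices they carry, leads to estimating $\sum_c\#\{\,U\le d<2U:\ d\mid c\,\}$, which I would reorganise (Dirichlet-hyperbola style) as a sum over $d$ of the number of admissible $c\equiv a\pmod m$ in an arithmetic progression modulo $d$; equivalently, this is a count of solutions of $xy\equiv a\pmod m$ with $x,y$ in short intervals. Here the almost-squarefree hypothesis enters: projecting to the squarefree kernel $m^*$ gives a clean multiplicative count, and since each solution of $xy\equiv a\pmod{m^*}$ lifts to at most $t=m/m^*$ solutions modulo $m$, one picks up exactly the factor $t$. Optimising the split between this estimate and the bound of Andrews, and summing over the $O(\log^2 m)$ boxes, gives $v_a(m)\le t\,m^{5/12+o(1)}$.

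The hardest step, I expect, is the middle one: showing that the per-branch polygons genuinely ``nest'' inside $\cB$, so that one does not pay the full count of Andrews on each of the $\asymp UV/m$ branches; and then extracting precisely the factor $t$ — rather than an uncontrolled power of $m$ — from the divisor/solution count when $m$ has repeated prime factors. Once that is in place, fixing the exponent at exactly $5/12$ is a routine optimisation of the dyadic split, together with the usual trivial bound $O(H)$ for vertices with $x\le H$ (or $m-x\le H$), which handles the corner boxes.
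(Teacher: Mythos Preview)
Your outline does not reach the theorem; the mechanism that produces the exponent $5/12$ is missing. Organising the vertices by the ``branches'' $xy=c$ and then counting branch changes via the divisor sum $\sum_c\#\{U\le d<2U:d\mid c\}$ simply recovers the total number of points of $\Hm$ in the box, which is $\asymp UV/m\asymp m^{1/2}$, i.e.\ nothing beyond Theorem~\ref{thm:Gen m}. The ``nesting'' assertion is not a property of the per-branch polygons (vertices on a fixed $xy=c$ need not be consecutive on $\Cm$), and Lemma~\ref{lem:Curve} is a linear-algebra lemma saying \emph{when} points lie on a common curve, not a bound on the number of lattice points on a hyperbola arc. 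Nothing in your sketch balances against Andrews to force an exponent below $1/2$.

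The paper's argument is quite different and uses Lemmas~\ref{lem:Equiv}, \ref{lem:Pet} and~\ref{lem:Quadratic}, none of which appear in your plan. If a box $[1,U]\times[1,V]$ with $UV\le m^{3/2+\varepsilon}$ contains $r$ vertices of $\Cm$, Lemma~\ref{lem:Pet} (Petrov) gives $k=\lfloor t m^{\varepsilon}\rfloor$ \emph{consecutive} vertices spanning a polygon of area $Q\ll UV(k/r)^3$. By Lemma~\ref{lem:Equiv} this polygon is affinely equivalent (over $\GL_2(\Z)$) to one inside a box of area $\ll Q$; the image points still satisfy a quadratic congruence $f\equiv 0\pmod m$. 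Now comes the determinant method: the $5\times 5$ matrix with rows $(X^2,XY,Y^2,X,Y)$ at any five of these points has determinant divisible by $m$ and of size $O(Q^4)$, so if $Q\le c\,m^{1/4}$ it vanishes over $\Z$, and Lemma~\ref{lem:Curve} forces all $k$ points onto a single integral conic $G=0$. Pulling back and forming $H(X)=X^2G(X,a/X)$, one shows via B\'ezout that either $G$ is a genuine (non-parabolic, $\Delta\ne0$) conic, whence Lemma~\ref{lem:Quadratic} gives $k\le m^{o(1)}$, or else the degree-$4$ congruence $H(x)\equiv 0\pmod m$ has at most $4^{\omega(m^*)}t$ roots by the Chinese Remainder Theorem and Hensel lifting; this is exactly where $t=m/m^*$ enters. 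Either way $k\le t m^{o(1)}$, contradicting the choice of $k$. Hence $Q>c\,m^{1/4}$, which combined with $Q\ll m^{3/2+4\varepsilon}t^3r^{-3}$ gives $r\ll t\,m^{5/12+O(\varepsilon)}$. The exponent $5/12=(3/2-1/4)/3$ comes from the determinant threshold $m^{1/4}$ together with Petrov's cubic scaling, and has no counterpart in your divisor-sum approach.
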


In particular, for a squarefree $m$ we have $m^* = m$, thus we have:

\begin{cor}
\label{cor:SF m}
For an arbitrary squarefree integer $m\ge 2$, uniformly over integers $a$ with $\gcd(a,m)=1$, we have
$$
v_a(m) \le m^{5/12 + o(1)}.
$$
\end{cor}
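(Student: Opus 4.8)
The statement is an immediate consequence of Theorem~\ref{thm:Typical m}, so I would not attempt to prove it in isolation. Indeed, if $m\ge 2$ is squarefree, then every prime dividing $m$ divides it exactly once, so its kernel is $m^*=\prod_{p\mid m}p=m$ and hence $t=m/m^*=1$. Plugging $t=1$ into the bound $v_a(m)\le t\,m^{5/12+o(1)}$ of Theorem~\ref{thm:Typical m} yields $v_a(m)\le m^{5/12+o(1)}$, uniformly over $a$ with $\gcd(a,m)=1$, which is exactly the asserted inequality. There is no obstacle at this last step; the whole difficulty is concentrated in Theorem~\ref{thm:Typical m} (and, for larger $t$, in Theorem~\ref{thm:Gen m}).

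For orientation, here is the route I would take to the underlying Theorem~\ref{thm:Typical m}, which refines~\eqref{eq:v 7/12}. First I would split the boundary of $\Cm$ into a bounded number of monotone arcs, so that it suffices to bound the number of vertices on one arc facing a corner of the box $[1,m-1]^2$; using the symmetry $(x,y)\mapsto(y,x)$ and reflections in the box, it is enough to treat the arc facing the origin. Every vertex $(x,y)$ on that arc satisfies $xy=a+km$ for some integer $k\ge 0$, with vertices near the corner having small $k$ and those further out lying on a short portion of a genuine hyperbola $xy=N$. I would then cut the arc at a threshold on $x+y$ (equivalently on $k$): on the inner piece I would bound the vertices directly by the number of lattice points on the relevant hyperbola arcs, which is $m^{o(1)}$ per curve, the number of curves being governed by the threshold; on the outer piece I would enclose the vertices in a convex region of area $S$ and apply G.~Andrews' bound~\cite{Andrews} that a convex lattice polygon of area $S$ has $O(S^{1/3})$ vertices, as already exploited in~\cite{FKS}. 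Optimising the threshold should produce the exponent $5/12$ rather than $7/12$.

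The step I expect to be the main obstacle — and the one that produces the factor $t=m/m^*$ — is the control of lattice points on (and just outside) a hyperbola arc when $m$ is not squarefree: over the prime-power part of the modulus the fibres of the natural reduction become larger, which I expect forces passage to a sublattice of index comparable to $t$ and the corresponding loss. When $m$ is squarefree this degeneracy is absent, so $t=1$, and the clean bound $v_a(m)\le m^{5/12+o(1)}$ of the Corollary drops out of Theorem~\ref{thm:Typical m} with no further work.
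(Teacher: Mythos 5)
Your derivation of the corollary is exactly the paper's: for squarefree $m$ one has $m^*=m$, hence $t=m/m^*=1$, and the bound of Theorem~\ref{thm:Typical m} specializes immediately to $v_a(m)\le m^{5/12+o(1)}$. (Your speculative sketch of how Theorem~\ref{thm:Typical m} itself is proved does not match the paper's actual argument, which rests on Lemmas~\ref{lem:Curve}--\ref{lem:Pet} rather than on Andrews' bound, but that is immaterial to the corollary under review.)
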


Finally,
a simple counting argument shows that $m^* = m^{1 + o(1)}$ for almost
all $m$ and thus leads to the following estimate:

\begin{cor}
\label{cor:Almost all m}
For $M \to \infty$ and all but $o(M)$ positive  integers $m\le M$, uniformly over integers $a$ with $\gcd(a,m)=1$, we have
$$
v_a(m) \le m^{5/12 + o(1)}.
$$
\end{cor}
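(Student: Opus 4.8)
The plan is to combine Theorem~\ref{thm:Typical m}, which gives $v_a(m)\le t\,m^{5/12+o(1)}$ with $t=m/m^*$ uniformly over $a$ with $\gcd(a,m)=1$, with the elementary fact that $t=m^{o(1)}$ for all but $o(M)$ of the integers $m\le M$. Because the bound of Theorem~\ref{thm:Typical m} is already uniform in $a$, the exceptional set of moduli that appears below does not depend on $a$, so nothing extra is needed to obtain the uniformity claimed in Corollary~\ref{cor:Almost all m}.

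First I would prove the counting estimate: for every fixed $\delta>0$,
$$
\#\{m\le M:\ m/m^*>M^{2\delta}\}\ll M^{1-\delta}.
$$
To see this, write $m=\prod_{p\mid m}p^{a_p}$ and set $e_p=\lfloor a_p/2\rfloor$, so that $d:=\prod_{p\mid m,\ a_p\ge 2}p^{e_p}$ satisfies $d^2\mid m$; since $e_p\ge(a_p-1)/2$ we also have $d\ge\bigl(\prod_{p\mid m,\ a_p\ge2}p^{a_p-1}\bigr)^{1/2}=(m/m^*)^{1/2}$. Hence $m/m^*>M^{2\delta}$ forces $d^2\mid m$ for some $d>M^{\delta}$, and the number of such $m\le M$ is at most $\sum_{d>M^\delta}\lfloor M/d^2\rfloor\ll M\cdot M^{-\delta}=M^{1-\delta}$.

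Next I would discard the $O(M^{1/2})=o(M)$ integers $m\le M^{1/2}$, and for each small $\delta>0$ the $O(M^{1-\delta})=o(M)$ integers $m\le M$ with $m/m^*>M^{2\delta}$. For every remaining $m$ we have $M^{1/2}<m\le M$ and $t=m/m^*\le M^{2\delta}\le m^{4\delta}$; letting $\delta=\delta(M)\to0$ slowly enough that $M^{1-\delta}$ is still $o(M)$ (for instance $\delta(M)=1/\sqrt{\log M}$) shows that $t=m^{o(1)}$ outside an exceptional set of size $o(M)$. Substituting this into Theorem~\ref{thm:Typical m} yields
$$
v_a(m)\le t\,m^{5/12+o(1)}=m^{o(1)}\,m^{5/12+o(1)}=m^{5/12+o(1)}
$$
for all $a$ with $\gcd(a,m)=1$, which is the assertion of Corollary~\ref{cor:Almost all m}.

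I do not anticipate a genuine obstacle: Theorem~\ref{thm:Typical m} carries all of the arithmetic content, and the rest is the familiar fact that the ``powerful part'' $m/m^*$ of a typical integer is very small. The one point needing a little care is the bookkeeping of the two nested $o(1)$ terms --- the one coming from the exceptional-set bound (governed by $\delta$) and the one already built into Theorem~\ref{thm:Typical m} --- which is managed by letting $\delta\to0$ slowly as $M\to\infty$.
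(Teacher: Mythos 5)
Your proposal is correct and follows exactly the route the paper intends: the paper only remarks that ``a simple counting argument shows that $m^*=m^{1+o(1)}$ for almost all $m$'' and then cites Theorem~\ref{thm:Typical m}, and your argument (bounding the number of $m\le M$ divisible by $d^2$ for some $d>M^{\delta}$, then letting $\delta\to 0$ slowly) is precisely that counting argument made explicit. The bookkeeping of the two $o(1)$ terms and the uniformity in $a$ are handled correctly.
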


\section{Distribution of Points on Curves}

We denote
$$
\NU= \{(x,y) \ : \  xy \equiv a \pmod m,\ 1\le x\le U,\,1\le y \le
V\}.
$$

We need the following asymptotic formula on $\NU$ that is
immediate from the Weil bound of Kloosterman sums; see, for
example,~\cite{FuKi} (we note that in~\cite{FuKi} it is given only
for $a=1$ but the proof extends to arbitrary $a$ with
$\gcd(a,m)=1$ at  the cost of only obvious typographical
adjustments).

\begin{lem}
\label{lem:Asymp}  Uniformly over integers $a,U,V$,
$$
\NU
 =   UV
\frac{\varphi(m)}{m^2} +O\(m^{1/2 + o(1)}\).
$$
\end{lem}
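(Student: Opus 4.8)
The plan is to express $\NU$ as a sum over the residues coprime to $m$ and detect the congruence $xy \equiv a \pmod m$ via additive characters, as is standard for such lattice-point counts on modular hyperbolas. Concretely, for each $x$ with $1 \le x \le U$ and $\gcd(x,m) = 1$, the condition $xy \equiv a \pmod m$ pins $y$ down to a single residue class $y \equiv a x^{-1} \pmod m$, so the count of admissible $y \in [1,V]$ is $V/m + O(1)$ on average but fluctuates; summing the error naively gives only $O(U)$, which is useless. Instead I would write
$$
\NU = \frac{1}{m}\sum_{\substack{1 \le x \le U \\ \gcd(x,m)=1}} \ \sum_{1 \le y \le V} \ \sum_{t=0}^{m-1} \mathbf{e}_m\bigl(t(xy - a)\bigr),
$$
where $\mathbf{e}_m(z) = \exp(2\pi i z/m)$, and separate the term $t = 0$, which contributes $\varphi_U(m) V/m$ with $\varphi_U(m) = \#\{1 \le x \le U : \gcd(x,m)=1\} = U\varphi(m)/m + O(m^{o(1)})$, the error here being the standard bound on the number of integers in a short interval coprime to $m$ (or one absorbs it at the end). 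This yields the main term $UV\varphi(m)/m^2$ up to an admissible error.

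The core of the argument is bounding the contribution of the nonzero frequencies $t$. For fixed $t \not\equiv 0$, the inner sum over $y \in [1,V]$ is a geometric progression bounded by $\min(V, \|tx/m\|^{-1})$ in the usual way, but it is cleaner to first sum over $x$: interchanging, the exponential sum over $x$ coprime to $m$ with the factor $\mathbf{e}_m(txy)$ is, after also summing a geometric series in $t$ or $y$, controlled by a Kloosterman sum $K(a\ell, n; m) = \sum_{z \bmod m}^{*} \mathbf{e}_m(a\ell z + n\bar z)$ once one opens up the incomplete ranges $[1,U]$ and $[1,V]$ by completing the sums (writing the indicator of an interval as a character sum with Fourier coefficients decaying like $1/\|\cdot\|$, which produces the familiar $\log m$ factors that get swallowed into $m^{o(1)}$). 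The Weil bound $|K(u,v;m)| \le \gcd(u,v,m)^{1/2} m^{1/2} \tau(m)$ then gives each completed Kloosterman sum size $m^{1/2 + o(1)}$, and the double sum over the two sets of Fourier coefficients contributes only a further $m^{o(1)}$, leaving a total error of $m^{1/2 + o(1)}$.

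The main obstacle — really the only nonroutine point — is handling the incomplete ranges: one must complete both the $x$-sum over $[1,U]$ and the $y$-sum over $[1,V]$ to full residue systems modulo $m$ so as to expose a genuine Kloosterman sum, and this completion step is where the $m^{o(1)}$ (logarithmic) losses and the need to split off the $\gcd$ terms in the Weil bound enter. Since the excerpt explicitly notes that the case $a = 1$ is treated in~\cite{FuKi} and that the extension to general $a$ with $\gcd(a,m)=1$ requires only typographical changes, I would simply carry out the completion-of-sums reduction to Kloosterman sums as above, cite~\cite{FuKi} for the remaining bookkeeping, and remark that replacing $K(1,n;m)$ by $K(\bar a, n; m)$ (equivalently a linear change of variable $z \mapsto a z$ in the Kloosterman sum) leaves all bounds unchanged, which establishes the stated uniform estimate.
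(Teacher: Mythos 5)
Your proposal is correct and follows exactly the route the paper itself invokes: the paper gives no independent proof of Lemma~\ref{lem:Asymp} but cites~\cite{FuKi}, whose argument is precisely the character-detection, completion-of-sums, and Weil-bound-for-Kloosterman-sums computation you describe, with the passage from $a=1$ to general $a$ coprime to $m$ being the trivial change of variable you note. No further comment is needed.
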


We prove the following statement in a much more
general form that we need for our purpose as we believe
this can be of independent interest.

\begin{lem}
\label{lem:Curve} Let $\mu_i(X,Y) = X^{h_i}Y^{k_i}$, $i=1, \ldots,
s$,  be $s$ arbitrary distinct monomials. Assume that for a set of
$K\ge s$ distinct points $(x_\nu, y_\nu) \in \Z^2$ with
$\max\{|x_\nu|, |y_\nu|\} \le H$, $\nu =1, \ldots, K$, over an
arbitrary field $\F$ we have
$$
\det \(\mu_i(x_{\nu_j},y_{\nu_j})\)_{i,j=1}^s = 0
$$
for any $1 \le \nu_1 < \ldots < \nu_s \le K$.
Then there is a polynomial $F$ of the form
$$
F(X,Y) = \sum_{i=1}^s A_i\mu_i(X,Y)
$$
with integer coefficients satisfying $|A_i| \le H^{O(1)}$, $i=1,
\ldots, s$,  where the implied constant depends only on $s$,
and such that $F(x_\nu, y_\nu)=0$, $\nu =1, \ldots, K$.
\end{lem}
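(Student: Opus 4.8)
The plan is to compress the entire hypothesis into a single rank condition and then to read off $F$ from one non‑vanishing subdeterminant via Cramer's rule. First I would form the $K\times s$ matrix
$$
M=\bigl(\mu_i(x_\nu,y_\nu)\bigr)_{1\le\nu\le K,\ 1\le i\le s},
$$
whose entries are integers, regarded here as elements of $\F$. The assumption that $\det(\mu_i(x_{\nu_j},y_{\nu_j}))_{i,j=1}^s=0$ for every $\nu_1<\dots<\nu_s$ says precisely that every $s\times s$ minor of $M$ vanishes in $\F$, that is, that $r:=\operatorname{rank}_\F M\le s-1$. If $r=0$ then every $\mu_i$ already vanishes at every one of the $K$ points, and $F=\mu_1$ works with $A_1=1$, $A_2=\dots=A_s=0$; so assume henceforth $1\le r\le s-1$.

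Since $\operatorname{rank}_\F M=r$, there is an $r\times r$ submatrix of $M$ with determinant nonzero in $\F$; let $i_1,\dots,i_r$ be its columns, $\rho_1,\dots,\rho_r$ its rows, and $M'=\bigl(\mu_{i_q}(x_{\rho_p},y_{\rho_p})\bigr)_{p,q=1}^r$, so $\det(M')\ne0$ in $\F$. The columns $C_{i_1},\dots,C_{i_r}$ of $M$ are then linearly independent and, because $\operatorname{rank}_\F M=r$, they span the whole column space of $M$. Fix any index $j\notin\{i_1,\dots,i_r\}$, possible since $r\le s-1$. Then $C_j=\sum_{q=1}^r\lambda_qC_{i_q}$ for unique $\lambda_q\in\F$; restricting this identity to the rows $\rho_1,\dots,\rho_r$ yields a nonsingular linear system whose Cramer solution is $\lambda_q=\det(M'_q)/\det(M')$, where $M'_q$ is $M'$ with its $q$-th column replaced by $\bigl(\mu_j(x_{\rho_p},y_{\rho_p})\bigr)_{p=1}^r$. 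Clearing the denominator $\det(M')$, I would set
$$
F(X,Y)=\det(M')\,\mu_j(X,Y)-\sum_{q=1}^r\det(M'_q)\,\mu_{i_q}(X,Y).
$$
Its coefficients $A_i$ are, up to sign, determinants of order $r\le s-1$ whose entries are the integers $\mu_i(x_\nu,y_\nu)$, hence are themselves integers, and $F$ is not identically zero because the coefficient $\det(M')$ of $\mu_j$ is nonzero.

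It then remains to verify the two required properties. For the vanishing: the dependence $C_j=\sum_q\lambda_qC_{i_q}$ holds as an identity of full length‑$K$ column vectors — not merely on the rows $\rho_1,\dots,\rho_r$ — exactly because $C_{i_1},\dots,C_{i_r}$ span the entire column space of $M$, and it is here that the hypothesis $\operatorname{rank}_\F M\le s-1$ is used with full force; evaluating the scaled relation at an arbitrary row $\nu$ gives $F(x_\nu,y_\nu)=\det(M')\bigl(\mu_j(x_\nu,y_\nu)-\sum_q\lambda_q\mu_{i_q}(x_\nu,y_\nu)\bigr)=0$ in $\F$. For the size bound, every entry satisfies $|\mu_i(x_\nu,y_\nu)|=|x_\nu|^{h_i}|y_\nu|^{k_i}\le H^{h_i+k_i}=H^{O(1)}$, so the Leibniz expansion of a determinant of order at most $s-1$ gives $|A_i|\le(s-1)!\,H^{O(1)}=H^{O(1)}$, with the implied constant depending only on $s$. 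I do not expect a serious obstacle here: the whole argument is elementary linear algebra, and the only points needing a little care are (i) that over a general field $\F$ one must choose the auxiliary $r\times r$ block so that its determinant is nonzero in $\F$ itself, which the characterization of $\operatorname{rank}_\F M$ by non‑vanishing minors guarantees, and (ii) that a column dependence detected on $r$ well‑chosen rows automatically propagates to all $K$ rows.
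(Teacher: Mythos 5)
Your proof is correct and follows essentially the same route as the paper's: both arguments reduce the hypothesis to the statement that the monomial-value matrix has rank $r\le s-1$ over $\F$, extract the linear dependency from a maximal nonsingular $r\times r$ block so that the coefficients are (up to sign) integer minors of size $H^{O(1)}$, and observe that the dependency propagates from the chosen $r$ rows to all $K$ points because those columns span the full column space. Your write-up is in fact somewhat more careful than the paper's (e.g.\ you handle the degenerate case $r=0$ explicitly), but it is not a different method.
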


\begin{proof}
Let $r$ be the largest rank of all matrices $\(\mu_i(x_{\nu_j},y_{\nu_j}\)_{i,j=1}^s$
with $1 \le \nu_1 < \ldots < \nu_s \le K$. We have $1 \le r \le s-1$.
Without loss of generality we can assume that the matrix
$$
M = \det \(\mu_i(x_{j},y_{j})\)_{i,j=1}^{r+1,r}
$$ is of rank $r$.
Thus, there is a unique nontrivial vanishing linear combination
of columns with  relatively prime coefficients
$a_1, \ldots, a_{r+1}$ such that the first
non-zero coefficient is $1$. Furthermore, it is obvious (from the explicit
expression for solutions of system of linear equations via determinants
and trivial upper bounds on these  determinants), that
$|a_i| \le H^{O(1)}$, $i=1, \ldots, r+1$

Thus for any $\nu=1, \ldots, K$ the matrix obtained from $M$ by adding the bottom row
$(\mu_1(x_\nu,y_\nu), \ldots, \mu_r(x_\nu,y_\nu))$ is also of rank $k$,
so
$$a_1\mu_1(x_\nu,y_\nu)+ \ldots +a_{r+1} \mu_{r+1}(x_\nu,y_\nu)=0,
$$
which concludes the proof.
\end{proof}

\begin{lem}
\label{lem:Quadratic}
Let
$$
G(X,Y)= AX^2  + BXY + C Y^2 + DX + EY + F\in \Z[X,Y]
$$
be an irreducible quadratic polynomial with coefficients of
size at most $H$. Assume that $G(X,Y)$ is not affine
equivalent to a parabola $Y = X^2$ and has a nonzero determinant
$$
\Delta = B^2-4AC \ne 0.
$$
 Then the equation $G(x,y)=0$ has
at most $H^{o(1)}$ integral solutions $(x,y) \in [0,H]\times[0,H]$.
\end{lem}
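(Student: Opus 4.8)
The plan is to split according to the sign of the discriminant $\Delta = B^2 - 4AC$, which governs whether the conic $G(x,y)=0$ is (a projective model of) an ellipse, a hyperbola, or degenerates. After a bounded affine change of variables with integer coefficients and bounded inverse denominator — one that diagonalizes the quadratic part — the equation becomes, up to clearing denominators, one of a short list of standard forms: an equation $a u^2 + b v^2 = c$ with $a,b>0$ (the elliptic case $\Delta<0$), or $a u^2 - b v^2 = c$ (the hyperbolic case $\Delta>0$), with all new coefficients of size $H^{O(1)}$. The point is that such a change of variables maps $[0,H]^2$ into a box of size $H^{O(1)}$, so it suffices to bound the number of integral points in a box of polynomial size on each standard form.

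For the elliptic case $au^2 + bv^2 = c$ with $a,b,c>0$ of size $H^{O(1)}$: here I would use the classical fact that the number of representations of an integer $n$ by a fixed positive definite binary quadratic form is $n^{o(1)}$. Concretely, such representations correspond (up to bounded multiplicity coming from the automorphisms of the form and from the finitely many classes in the genus) to ways of writing $n$ as a product tied to ideals of norm $n$ in an order of the imaginary quadratic field $\Q(\sqrt{-ab})$, and the number of such is $\tau(n)^{O(1)} = n^{o(1)}$; since $c = H^{O(1)}$ this is $H^{o(1)}$. One must be slightly careful that the coefficients $a,b$ themselves grow with $H$, but the divisor-bound estimate is uniform enough: the number of integral points on $au^2+bv^2 = c$ is at most the number of divisors of $abc$ raised to a bounded power, hence $H^{o(1)}$.

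For the hyperbolic case $au^2 - bv^2 = c$ the situation is genuinely different and this is where I expect the main obstacle: a Pell-type equation can have infinitely many integral solutions, so one cannot hope for a bound without using the box constraint $u,v \le H^{O(1)}$. Here I would factor over $\Z[\sqrt{ab}]$ (or pass to the real quadratic order), writing $c = (au - \sqrt{ab}\,v)(au+\sqrt{ab}\,v)/a$ up to units, so solutions lie in a bounded number of cosets of the unit group; within each coset the solutions form a geometric-type progression in the fundamental unit, and the constraint $|u|,|v| \le H^{O(1)}$ forces at most $O(\log H) = H^{o(1)}$ of them, again with the per-coset count of starting points controlled by a divisor bound $H^{o(1)}$. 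The hypotheses $\Delta \ne 0$ and "not affine equivalent to $Y=X^2$" are exactly what is needed to rule out the degenerate and parabolic cases, where the conic is a union of lines or a genuine parabola and could carry $\gg H$ integral points in the box; irreducibility further rules out the reducible degenerate conics. Assembling the cases and tracking that every coefficient stays of size $H^{O(1)}$ through the reductions gives the claimed bound $H^{o(1)}$.
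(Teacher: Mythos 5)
Your overall strategy coincides with the paper's: reduce $G(x,y)=0$ by an integral linear substitution (completing the square twice, which is legitimate precisely because $\Delta\ne 0$) to a Pell-type normal form $W^2-\Delta Z^2=V$ with $|V|\le H^{O(1)}$ and $|W|,|Z|\le H^{O(1)}$, and then count solutions of bounded height. The paper outsources the reduction to \cite[Proposition~1]{CillGar} and \cite[Theorem~1]{Shel} and the counting to \cite[Lemma~3.5]{VaWo}; your treatment of the definite case $\Delta<0$ (divisor bound for representations by a positive definite form) and of the indefinite case with $\Delta$ not a perfect square (finitely many unit-orbits, each meeting the box in $O(\log H)$ points) is essentially a proof of that lemma of Vaughan and Wooley, so these parts are fine.

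The one genuine hole is the sub-case $\Delta>0$ a perfect square, which your dichotomy by the sign of $\Delta$ silently lumps into the hyperbolic case but which your argument there does not cover: when $ab$ (equivalently $\Delta$) is a square, $\Z[\sqrt{ab}]$ is not an order in a real quadratic field, there is no fundamental unit greater than $1$, and the ``geometric progression in the unit group'' count collapses. What happens instead is that the quadratic part splits into two rational linear forms and the equation becomes $(sW-tZ)(sW+tZ)=c'$ over $\Z$; here irreducibility of $G$ is what guarantees $c'\ne 0$ (otherwise the conic is a pair of lines carrying $\gg H$ lattice points in the box), and then the divisor bound $\tau(|c'|)=H^{o(1)}$ finishes. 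This is not a pathological corner: the model conic $xy=a$ has $\Delta=1$, and it is exactly the case the paper delegates to Shelestunova's thesis \cite{Shel} as a separate reference. With that sub-case added (a simpler argument than the ones you did write out), your proof is complete and matches the paper's.
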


\begin{proof}

%
The proof is based on the reduction of the equation $G(x,y)=0$
to a Pell equation $X^2 - UY^2 = V$ with some integers $U$ and $V$ of size $H^{O(1)}$
together with the estimate of  R.~C.~Vaughan and
T.~D.~Wooley~\cite[Lemma~3.5]{VaWo}
on  the number of solutions of this equation of a given size.

In the case when  the discriminant $\Delta$ is not a perfect square
the above reduction is given  by J.~Cilleruelo and
M.~Z.~Garaev~\cite[Proposition~1]{CillGar}. If $\Delta$ is a perfect square
it is obtained by V.~Shelestunova~\cite[Theorem~1]{Shel}.
\end{proof}

\section{Integral Polygons}

We say that a polygon $\cP \subseteq \R^2$ is integral if
all its vertices belong to the integral lattice $\Z^2$.

Also, following V.~I.~Arnold~\cite{Arn} we say two  polygons $\cP, \cQ \subseteq \R^2$ are equivalent
is there is an affine transformation
$$T: \vec{x} \mapsto A\vec{x} + \vec{b}, \qquad \vec{x} \in \R^2
$$
for $A = \GL_2(\Z)$ and $\vec{b} \in \Z^2$ preserving the integral
lattice $\Z^2$ (that is,  $\det A = \pm 1$) that maps $\cP$ to
$\cQ$.

We need the following result of I.~B{\'a}r{\'a}ny and
J.~Pach~\cite[Lemma~3]{BarPach}:

\begin{lem}
\label{lem:Equiv}  An integral polygon of area $S$ is equivalent to
a polygon contained in some box $[0,u]\times[0,v]$ of area $uv\le 4 S$.
\end{lem}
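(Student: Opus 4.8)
The plan is to reduce the lemma to a clean statement about lattice widths and then attack that statement via the geometry of numbers. For a linear form $\vec u\in\R^2$ put $W(\vec u)=\max_{\vec x\in\cP}\langle\vec u,\vec x\rangle-\min_{\vec x\in\cP}\langle\vec u,\vec x\rangle$, the width of $\cP$ in the direction $\vec u$. If $\vec a,\vec b$ is any basis of $\Z^2$, the linear map $L\colon\vec x\mapsto(\langle\vec a,\vec x\rangle,\langle\vec b,\vec x\rangle)$ has matrix $[\vec a\mid\vec b]^{\mathsf T}\in\GL_2(\Z)$, so $L(\cP)$ is an integral polygon equivalent to $\cP$ whose $x$- and $y$-extents equal $W(\vec a)$ and $W(\vec b)$; since $\vec a,\vec b\in\Z^2$ and the vertices of $\cP$ are integral, these extents and the extreme values are integers, so after a translation by a vector of $\Z^2$ the polygon lies in $[0,W(\vec a)]\times[0,W(\vec b)]$. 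Hence it suffices to exhibit a $\Z$-basis $\vec a,\vec b$ with $W(\vec a)W(\vec b)\le 4S$. (If $\cP$ is not full-dimensional then $S=0$ and the claim is vacuous, so assume $\dim\cP=2$.)

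To produce such a basis I would pass to the difference body $Q:=\cP-\cP$, a $0$-symmetric convex body with $\vec 0$ in its interior. Its support function satisfies $h_Q(\vec u)=h_\cP(\vec u)+h_\cP(-\vec u)=W(\vec u)$, so $W(\vec u)=\|\vec u\|_{Q^\circ}$, the gauge of the polar body $Q^\circ$; in particular $W(\cdot)$ is a norm. Let $\mu_1\le\mu_2$ be the successive minima of $Q^\circ$ with respect to $\Z^2$; in the plane these are simultaneously realised by a $\Z$-basis $\vec a,\vec b$, so that $W(\vec a)W(\vec b)=\mu_1\mu_2$. Now three standard inequalities finish the job: Minkowski's second theorem gives $\mu_1\mu_2\,\vol(Q^\circ)\le 2^2$; Mahler's inequality in the plane gives $\vol(Q)\,\vol(Q^\circ)\ge 8$; and the Rogers--Shephard inequality gives $\vol(Q)=\vol(\cP-\cP)\le\binom{4}{2}\vol(\cP)=6S$. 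Chaining them,
$$W(\vec a)W(\vec b)=\mu_1\mu_2\le\frac{4}{\vol(Q^\circ)}\le\frac{4\,\vol(Q)}{8}=\frac{\vol(Q)}{2}\le 3S\le 4S,$$
and the first paragraph then yields the lemma.

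The one ingredient that should be flagged rather than waved through is the assertion that in dimension two the two successive minima of a $0$-symmetric convex body are attained simultaneously by a basis of $\Z^2$; this is a classical fact of the geometry of numbers (it becomes false in high dimensions), and is the step where a little care is genuinely needed. Everything else is bookkeeping with universal constants, and it even leaves a margin — one obtains $3S$, not merely $4S$, and the lower Rogers--Shephard bound $\vol(\cP-\cP)\ge 4S$ is not used at all. A more elementary route, closer in flavour to the original argument of B\'ar\'any and Pach, is to place $\cP$, by a $\GL_2(\Z)$-map and a translation, so that the area $wv$ of its axis-parallel bounding box $[0,w]\times[0,v]$ is as small as possible; minimality forces $w\le v$ and $\mathrm{ext}_\cP(x\pm y)\ge v$, and one then proves $S\ge\tfrac14 wv$ by inspecting the convex hull of the $O(1)$ points of $\cP$ that are extremal in the directions $(1,0),(0,1),(1,1),(1,-1)$. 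There the main obstacle is the ``diagonal'' configurations, in which $\cP$ hugs two opposite corners of the box and a short but fiddly case analysis is required; the difference-body argument avoids this altogether.
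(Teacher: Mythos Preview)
Your argument is correct. The chain
\[
W(\vec a)W(\vec b)=\mu_1\mu_2\le\frac{4}{\vol(Q^\circ)}\le\frac{\vol(Q)}{2}\le 3S
\]
via Minkowski's second theorem, Mahler's planar inequality $\vol(Q)\vol(Q^\circ)\ge 8$, and Rogers--Shephard is sound, and the flagged fact that the two successive minima of a planar symmetric convex body are realised by a lattice basis is indeed a classical result (and easy to prove directly in dimension~$2$). You even land at $3S$, a genuine sharpening of the stated $4S$.

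As for comparison with the paper: there is essentially nothing to compare against. The paper does not prove this lemma at all; it simply quotes it from B\'ar\'any and Pach~\cite[Lemma~3]{BarPach}, and remarks that Arnold's covering result~\cite{Arn} would yield the same conclusion with a weaker constant. Your difference-body/polar-body argument is therefore not an alternative to the paper's proof but a self-contained substitute for an external citation. The elementary bounding-box-minimisation route you sketch at the end is indeed the flavour of the original B\'ar\'any--Pach argument; your geometry-of-numbers approach trades the case analysis there for heavier but cleaner machinery, and delivers a better constant as a bonus.
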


We note that it can also be derived (with a slightly weaker constant)
from a result of
V.~I.~Arnold~\cite[Lemma~1 of Section~2]{Arn} that asserts
that any integral convex polygon of area $S$ can be covered by an
integral  parallelogram of area at most $6S$.

We also recall the following general result of F.~V.~Petrov~\cite[Lemma 2.2]{Pet}
which we use only in $\R^2$. We use $\vol \fA$ to denote the volume
of a compact set $\fA\subseteq \R^d$

\begin{lem}
\label{lem:Pet}
Let $\fU\subseteq \R^d$ be a convex compact. We consider
a finite sequence of compacts $\fV_i\subseteq K$, $i=1, \ldots, n$,
such that none of them meets the convex hull of others. Then
$$\sum_{i=1}^n (\vol \fV_i)^{(d-1)/(d+1)}\ll (\vol \fU)^{(d-1)/(d+1)},$$
where the implied constant depends only on $d$.
\end{lem}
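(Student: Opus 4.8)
The plan is to reduce the estimate to a continuous analogue of G.~Andrews' bound on the number of vertices of a convex lattice polygon, via the geometry of the convex hull of the union.

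\emph{Reductions.} Replacing each $\fV_i$ by its convex hull preserves the hypothesis: if a hyperplane strictly separates $\fV_i$ from $\mathrm{conv}(\bigcup_{j\ne i}\fV_j)$, then, since $\mathrm{conv}(\bigcup_j\fV_j)=\mathrm{conv}(\bigcup_j\mathrm{conv}\,\fV_j)$, the same hyperplane separates $\mathrm{conv}\,\fV_i$ from $\mathrm{conv}(\bigcup_{j\ne i}\mathrm{conv}\,\fV_j)$. This keeps every body inside the convex set $\fU$ and only enlarges the volumes, so we may assume each $\fV_i$ is convex. The inequality is invariant under invertible affine maps applied simultaneously to all the sets, and $\vol(P)\le\vol(\fU)$ for $P:=\mathrm{conv}(\bigcup_i\fV_i)$, so it suffices to bound $\sum_i(\vol\fV_i)^{(d-1)/(d+1)}$ by $\vol(P)^{(d-1)/(d+1)}$; moreover we may put $P$ in John position, so that $\vol(P)\asymp D^d$ with $D=\mathrm{diam}\,P$ (all implied constants depending on $d$ only).

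\emph{Structure.} For each $i$ fix a halfspace $H_i^+\supseteq\fV_i$ disjoint from every $\fV_j$, $j\ne i$, and push its bounding hyperplane inward until it supports $\fV_i$; the cap $C_i:=P\cap H_i^+$ then contains $\fV_i$, is capped off by a supporting hyperplane of $\fV_i$, and meets no $\fV_j$ with $j\ne i$. Hence the sets $\fV_i\cap\partial P$ are pairwise disjoint, inherit a cyclic order from $\partial P$, and are separated by straight bridges of $\partial P$; since the total exterior turning of $\partial P$ is bounded, we may split $\partial P$ into $O(1)$ arcs each of small total turning and reduce to bounding $\sum(\vol\fV_i)^{(d-1)/(d+1)}$ over the bodies met along one such arc.

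\emph{The crux.} This one-arc bound is the continuous analogue of Andrews' estimate. Along a low-turning arc the bodies appear in order, say $\fV_{i_1},\dots,\fV_{i_r}$; their footprints on the axis of the arc are essentially disjoint intervals of lengths $w_1,\dots,w_r$ with $\sum_k w_k\ll D$, and the role played in Andrews' argument by "there are few short lattice vectors" is played here by the fact that convex position forces each $\fV_{i_k}$ into the thin slab of $P$ left uncovered by the caps of the bodies flanking it, so that $\vol\fV_{i_k}$ is bounded above by a fixed power of $w_k$ (after the John normalisation, of the order $w_k^{d+1}/D$). Summing and combining Hölder's inequality with the superadditivity of $t\mapsto t^{d-1}$ then gives $\sum_k(\vol\fV_{i_k})^{(d-1)/(d+1)}\ll D^{-(d-1)/(d+1)}\big(\sum_k w_k\big)^{d-1}\ll D^{d(d-1)/(d+1)}\asymp\vol(P)^{(d-1)/(d+1)}$, and summing over the $O(1)$ arcs finishes the proof. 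An equivalent, probably cleaner packaging avoids the arc decomposition: attach to $\fV_i$ a Macbeath region of $P$ centred at a suitable interior point of $\fV_i$, show its volume is $\asymp\vol\fV_i$, invoke the economic cap-covering lemma of B\'ar\'any and Larman to see these regions have bounded overlap, and bound the resulting sum by an affine-surface-area-type functional of $P$, which the affine isoperimetric inequality controls by $\ll\vol(P)^{(d-1)/(d+1)}$.

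\emph{Main obstacle.} The delicate point is precisely the confinement estimate $\vol\fV_{i_k}\ll w_k^{d+1}/D$ (equivalently, that the Macbeath regions genuinely pack with bounded overlap irrespective of their sizes). A body on the arc need not lie in a small cap of its own, so it cannot be controlled by its individual width in isolation; and a naive dyadic-in-volume bookkeeping loses a logarithmic factor, since bodies of wildly different volumes compete for the same thin region near $\partial P$. Making the one-arc estimate hold uniformly over all volume scales at once is the step that needs real care; granting it, the passage from general compacts to convex bodies and the gluing of the arcs are routine.
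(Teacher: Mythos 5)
The paper does not actually prove this lemma: it is quoted verbatim as Lemma~2.2 of F.~V.~Petrov's paper \cite{Pet}, so there is no internal proof to compare yours with, and your write-up has to stand on its own as a proof of a known but genuinely nontrivial theorem. It does not yet do so. Your reductions (passing to convex hulls, affine normalisation, replacing $\fU$ by $P=\mathrm{conv}(\bigcup_i\fV_i)$, John position) are correct and standard, and you have located the difficulty accurately. But the step you yourself label ``the crux'' --- that each body is confined to a region of volume $\ll w_k^{d+1}/D$ for footprints $w_k$ with $\sum_k w_k\ll D$, equivalently that the associated Macbeath regions pack with bounded overlap \emph{across all volume scales simultaneously} --- is essentially the entire content of the lemma, and you state explicitly that you have not established it. As you observe, the naive route (dyadic decomposition by cap volume combined with the wet-part estimate $\vol P(\varepsilon)\ll\varepsilon^{2/(d+1)}\vol P$) loses a logarithm, and removing that logarithm is exactly what Petrov's argument (and B\'ar\'any's cap-covering technique) is engineered to do. A reviewer cannot grant you the one step that ``needs real care''; this is a genuine gap, not a verification left to the reader.

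Two further concrete problems. First, your structural paragraph is intrinsically two-dimensional: a cyclic order on $\partial P$, arcs of small total exterior turning, and footprints on ``the axis of the arc'' have no meaning for $d\ge 3$, where $\partial P$ is a $(d-1)$-dimensional surface, yet you carry general-$d$ exponents through the computation. Since the paper only uses the lemma in $\R^2$, a planar proof would suffice for the application, but then you should say so and commit to $d=2$ throughout. Second, even in the plane, the assertion that the footprints are ``essentially disjoint intervals'' is unproved and is again where the work lies: what convex position gives you directly is, for each $i$, a line separating $\fV_i$ from the convex hull of the others, hence a cap of $P$ containing $\fV_i$ and meeting no other body; but these caps need not be pairwise disjoint, and upgrading ``each cap misses the other bodies'' to ``the widths sum to $O(D)$ with bounded multiplicity'' is the same multi-scale packing statement you deferred. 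I recommend either carrying out the proof of \cite[Lemma~2.2]{Pet} (or the equivalent disjoint-cap inequality from the cap-covering literature) in full, or simply citing it, as the paper does.
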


\section{Proof of Theorem~\ref{thm:Gen m}}

We estimate the number of  vertices $(x,y)$ of $\Cm$
that are inside of the square $[0, m/2]\times[0, m/2]$. The other three squares
\begin{equation}
\label{eq:Squares}
[0,m/2] \times [m/2,m],\quad
[m/2,m] \times [0,m/2], \quad
[m/2,m]\times [m/2,m]
\end{equation}
can be dealt with fully analogously.

We fix some $\varepsilon>0$ and also recall the well-known estimates
on the divisors and  Euler functions
\begin{equation}
\label{eq:phitau}
 \tau(s) = s^{o(1)}
 \mand  \varphi(s) =s^{1+o(1)},
\end{equation}
as $s \to \infty$,
see~\cite[Theorems~317 and~328]{HW},  we obtain our main technical result.

We claim that, for a sufficiently large $m$ we have
\begin{equation}
\label{eq:Cond}
xy \le m^{3/2 + \varepsilon}.
\end{equation}
for each such vertex.
Indeed, assume that condition~\eqref{eq:Cond} fails.

Then applying Lemma~\ref{lem:Asymp} to
$\Hm$ with $U = x m^{-\varepsilon/4}$ and $V = y m^{-\varepsilon/4}$,
we see that there are points $\vec{w}_j$, $j = 1, 2, 3, 4$,
in each of the translates of the box $[0,U] \times [0,V]$
to the corners of the $[0, m]\times[0, m]$ square.

Therefore the point $(x,y)$ is inside of the
convex hull of the points $\vec{w}_j$, $j = 1, 2, 3, 4$,
but is different from all of them,
and thus cannot be a point on $\Cm$.

We now see that there is
some integer $A$ with $1 \le A < m$ such that
for $(x,y)\in \Cm$ we have
$$
xy = A + m \ell
$$
with some nonnegative integer $\ell \le m^{3/2 -1 + \varepsilon}$.
When such an integer $k$ is fixed, by~\eqref{eq:phitau} there are $m^{o(1)}$
possibilities for the point $(x,y)$ and
the result now follows.

\section{Proof of Theorem~\ref{thm:Typical m}}

Fix some $\varepsilon > 0$.

As in the proof of Theorem~\ref{thm:Gen m}   we see
from Lemma~\ref{lem:Asymp} that all vertices $(u,v)$ on
$\Cm$ that are also inside of the square $[0,m/2] \times [0,m/2]$ satisfy
$$
uv \le  m^{3/2+ o(1)}.
$$
We estimate the number of such points.

The number of vertices  of $\Cm$ inside of the squares~\eqref{eq:Squares}
can be estimated fully analogously.

Hence,  it is
enough to estimate the number of vertices of $\Cm$ inside of
each of the boxes $[1,U]\times[1,V]$ with
$U = 2^j$, $V = m^{3/2+\varepsilon}2^{-j}$, $j  =1, 2, \ldots$.
Since only $O(\log m)$ such boxes are of our interest.

Let $\vec{v}_1,\ldots,\vec{v}_r\in \Cm$ be located in
$[1,U]\times[1,V]$. Assume that $r \ge t m^\varepsilon$ as otherwise
there is nothing to prove.
Select
$$
k = \fl{t m^{\varepsilon}}.
$$

By Lemma~\ref{lem:Pet}, there are $k$ consecutive vertices
$\vec{v}_{j+1},\ldots,\vec{v}_{j+k}$ such that the area of the
polygon formed by these vertices is bounded by
\begin{equation}
\label{eq:Q-bound}
Q= O(UV (r/k)^{-3}) = O(m^{3/2+4\varepsilon} t^3 r^{-3}).
\end{equation}
In particular, we have $k \ge 5$ for a sufficiently large $m$.

By Lemma~\ref{lem:Equiv}, we have an affine
transformation of $\R^2$ preserving $\Z^2$ such that the images of
all points $\vec{v}_{j+\nu}$ are points  $(X_\nu,Y_\nu) \in
[0,u]\times[0,v]$, $\nu =1, \ldots, k$ for some real positive $u$
and $v$ with $uv \ll Q$.

Note that all these  points  satisfy the congruence
\begin{equation}
\label{eq:Congr}
f(X_\nu,Y_\nu)\equiv 0 \pmod m, \qquad \nu =1, \ldots, k,
\end{equation}
where $f$ is a nonzero modulo $m$ quadratic polynomial
(which is the image of $XY-a$ under the above transformation).

Without loss of generality, we can assume that $X_k=Y_k=0$. So,
the constant term of $f$ is $0$. Take arbitrary
$\nu_1<\ldots<\nu_5$. The matrix
$$W = (X_{\nu_i}^2, X_{\nu_i} Y_{\nu_i}, Y_{\nu_i}^2,
X_{\nu_i}, Y_{\nu_i})_{i=1,\ldots,5}$$ is singular modulo $m$
since $f(X_{\nu_i},Y_{\nu_i})\equiv 0 \pmod m$, $i=1, \ldots, 5$.
This implies that  the determinant $\det W$ is divisible by $m$. Examining the
structure of the terms of $\det W$ one also sees that $\det W =
O(Q^4)$.

Therefore, if  $Q\le cm^{1/4}$ with an appropriate constant $c$
then $\det W = 0$ (over  $\Z$). We now see from
Lemma~\ref{lem:Curve} that there is a nonzero quadratic polynomial
$F(X,Y)$ such that
\begin{equation}
\label{eq:Equation}
F(X_\nu,Y_\nu)=0, \qquad \nu =1, \ldots, k,
\end{equation}
with the integer coefficients of size $m^{O(1)}$. Moreover, we may assume that the coefficients
of $F$ are relatively prime.

Let $\vec{v}_{j+\nu}=(x_\nu,y_\nu)$, $\nu=1,\ldots,k$. The
equation~\eqref{eq:Equation} is equivalent to the equation
\begin{equation}
\label{eq:Equation2}
G(x_\nu,y_\nu)=0, \qquad \nu =1, \ldots, k,
\end{equation}
for some quadratic polynomial $G(X,Y)\in \Z[X,Y]$ with relatively prime coefficients.
Next, we consider the
polynomial $H(X)=X^2G(X,a/X)$ over the ring of residues modulo
$m$. For any $\nu=1,\ldots,k$ we have $H(x_\nu)\equiv 0\pmod m$.

We take an arbitrary prime divisor $p>5$ of $m^*$. Assume that all coefficients
of $H$ are divisible by $p$. Then
any solution of the congruence $xy\equiv a\bmod p$ also satisfies the
congruence $G(x,y)\equiv 0\bmod p$. Therefore, there are at least $p-1>4$
common zeros of polynomials $xy-a$ and $G$ modulo $p$.  By
the B\'ezout Theorem, see, for example,~\cite[Section~5.3]{Fult},
the polynomial $G$ is a multiple of $xy-a$ modulo $p$.
Then $G$ is irreducible and
is not affine equivalent to a parabola modulo $p$. Consequently,
$G$ is irreducible and is not affine equivalent to a parabola over $\Z$
and also has a nonzero determinant.
Thus, we can apply Lemma~\ref{lem:Quadratic} and conclude that
the equation $G(x,y)=0$ has
at most $m^{o(1)}$ integral solutions $(x,y) \in [0,m]\times[0,m]$.
Now assume that for any prime divisor $p>5$ of $m^*$ there is a coefficient
of $H$ not divisible by $p$.  Using
the Chinese Remainder Theorem, we see that
the congruence
$H(x)\equiv 0\pmod m$, $1 \le x \le m$, has at most
$t4^{\omega(m^*)}  = t \tau(m^*)^2$ solutions, where $\omega(s)$
is the number of prime divisors of an integer  $s$.
Recalling~\eqref{eq:phitau} we see that  in both cases
$k = t m^{o(1)}$ which contradicts to our choice of  $k =
\fl{t m^{\varepsilon}}$. Therefore $Q>cm^{1/4}$ which together
with~\eqref{eq:Q-bound} implies $r = O\(t m^{5/12+4\varepsilon/3}\)$.
Since $\varepsilon > 0$ is arbitrary, the result now follows.

\section{Comments}

It is shown in~\cite{Shp3} that for almost all residue classes
$a$ modulo $m$, the asymptotic formula of Lemma~\ref{lem:Asymp}
can be improved. Perhaps this can be used to improve the
bound of  Theorems~\ref{thm:Gen m}
and~\ref{thm:Typical m} on average over $a$.

Convex hull of the points on multidimensional hyperbolas
can be studied as well. In fact in the multidimensional case
a different technique can be used to obtain versions of
Lemma~\ref{lem:Asymp} which have no analogues in the two dimensional case, see~\cite{Shp2}. Furthermore, the method of proof of
Theorem~\ref{thm:Gen m}  easily extends to the multidimensional
case as well. However extending the method of proof
of  Theorem~\ref{thm:Typical m} seems to be more difficult and
we pose this as an open question.

\section*{Acknowledgement}

The authors would like to thank Imre B{\'a}r{\'a}ny
for the information about the existence of
a proof of Lemma~\ref{lem:Equiv} in~\cite{BarPach}
and Oleg German for an alternative proof of this result.
The authors are also grateful  to
P{\"a}r Kurlberg for supplying then with an alternative proof
of Lemma~\ref{lem:Quadratic} and
to David McKinnon and Alfred Menezes for
useful discussion and for the information about the thesis of
V.~Shelestunova~\cite{Shel}.

The research of S.~K. was supported in part
Russian Fund of Basic Researches  Grant N.~11-01-00329
from the Russian Fund of Basic Researches
 and that of I.~S. by ARC grant  DP1092835.

\end{document}